\newcommand{\toggle}[1]{\triangle\left\{#1 \right\} }
\begin{document}
\pagestyle{plain}

\title{Transitive generalized toggle groups containing a cycle}
\date{\today\\[10pt]}
\author{Jonathan S. Bloom\\
Dan Saracino}
\maketitle

\begin{abstract}
    In \cite{striker2018rowmotion} Striker generalized   Cameron and Fon-Der-Flaass's notion of a toggle group.  In this paper we begin the study of transitive generalized toggle groups that contain a cycle.  We  first show that if such a group has degree $n$ and contains a transposition or a 3-cycle then the group contains $A_n$. Using the result about transpositions, we then prove that a transitive generalized toggle group that contains a short cycle must be primitive. Employing a result of Jones \cite{jones2014primitive}, which relies on the classification of the finite simple groups, we conclude that any transitive generalized toggle group of degree $n$ that contains a cycle with at least 3 fixed points must also contain $A_n$. Finally, we look at imprimitive generalized toggle groups containing a long cycle and show that they decompose into a direct product of primitive generalized toggle groups each containing a long cycle.  
\end{abstract}
\section{Introduction}

The study of toggle groups dates back to the 1995 work \cite{cameron1995orbits} of Cameron and Fon-Der-Flaass on order ideals of posets.  To provide a simpler proof of a result from \cite{fon1993orbits}, Cameron and Fon-Der-Flaass associated to each element $p$ of a finite poset $P$ a permutation $s_p$ of the set $J(P)$ of order ideals of $P$.  For each $I\in J(P)$, $s_p(I)$ was defined to be $I\toggle p$ if $I\toggle p \in J(P)$, and $s_p(I)=I$ otherwise.  After using the $s_p$'s to achieve their proof, they went on to study the subgroup $G(P)= \langle s_p\ |\ p\in P\rangle$ of the symmetric group $S_{J(P)}$.  They proved \cite[Theorem 4]{cameron1995orbits} that if the Hasse diagram of $P$ is connected then $G(P)$ contains the alternating group on $J(P)$, while if $P$ is the disjoint union of posets $P_1$ and $P_2$ then $G(P)\cong G(P_1)\times G(P_2)$.

In \cite{striker2012promotion} Striker and Williams used the $s_p$'s in studying what they called ``promotion" and ``rowmotion" on order ideals.  They called the $s_p$'s \emph{toggles} and $G(P)$ the \emph{toggle group}. Several other papers soon followed \cite{einstein2014piecewise,grinberg2016iterative, gringberg2015iterative} that used analogues of the $s_p$'s in different contexts. In \cite{striker2018rowmotion} Striker noted that the definition of the $s_p$'s only relied on the order ideals of $P$ being subsets of $P$, and generalized the definition of the toggle group.

\begin{definition}
Let $E$ be a finite set, which we refer to as the \emph{ground set}. Let $\cL$ be a set of subsets of $E$. For each $e\in E$, define the \emph{toggle} $\tau_e: \cL\rightarrow \cL$ by letting $\tau_e(X)=X\toggle e$ if $X\toggle e\in\cL$ and $\tau_e(X)=X$ otherwise.  The \emph{generalized toggle group} $T(\cL)$ is the subgroup of the symmetric group $S_{\cL}$ generated by $\makeset{\tau_e}{e\in E}$.

\end{definition}

Striker initiated the study of $T(\cL)$ for a number of combinatorially interesting choices of $\cL$. In particular, she obtained analogs of \cite[Theorem 4]{cameron1995orbits}, for several examples where $\cL$ is a set of subsets of a finite poset or graph.

In what follows, we head in a somewhat different direction. There is a long history of results about primitive permutation groups that contain a nontrivial cycle, dating back to the work of Jordan in the 1870's.  We will prove some structure results for all transitive generalized toggle groups that contain a nontrivial cycle.  In Section 3 we will show that if $T(\cL)$ is transitive with degree $n$ and contains a transposition then $T(\cL)$ must be the full symmetric group $S_n$, and if $T(\cL)$ is transitive with degree $n$ and contains a 3-cycle then $T(\cL)$ must contain the alternating group $A_n$. These results are analogues of standard results \cite[Theorem 3.3A]{dixon1996permutation} for primitive permutation groups, but we will obtain them for transitive generalized toggle groups without assuming primitivity. Using the result for transpositions, we will then show that if $T(\cL)$ is transitive with degree $n$ and contains a ``short cycle" (i.e., a cycle of length at most $n-1$), then $T(\cL)$ is primitive. Using \cite[Corollary 1.3]{jones2014primitive}  (a result that relies on the classification of the finite simple groups) it will then follow that if $T(\cL)$ contains a cycle with at least three fixed points then $T(\cL)$ contains $A_n$.

In Section 4 we will show that if $T(\cL)$ is imprimitive and contains a nontrivial cycle (necessarily a ``long cycle" of length $n$) then $T(\cL)$ is isomorphic to a direct product of primitive groups $T(\cL_1) \times \cdots \times T(\cL_k)$ where each $T(\cL_i)$ has degree at least two and contains a long cycle whose length is the degree of $T(\cL_i)$.

In obtaining the results of Section 4 we will use what Striker \cite{striker2018rowmotion} calls the ``toggle-disjoint Cartesian product" of sets $\cL_1$ and $\cL_2$.

\begin{definition}
If $E_1$ and $E_2$ are disjoint ground sets and $\cL_i\subseteq 2^{E_i}$ for $i=1,2$  then we say \emph{$\cL$ is the toggle-disjoint Cartesian product of $\cL_1$ and $\cL_2$} and write $\cL=\cL_1 \otimes \cL_2$ if $\cL=\makeset{X_1\cup X_2}{X_i\in \cL_i}$.

\end{definition}

Striker makes this definition  under an assumption weaker than $E_1\cap E_2=\emptyset$, but we will only use it when $E_1\cap E_2=\emptyset$.  It is straightforward to check \cite[Theorem 2.18]{striker2018rowmotion} that if $\cL$ is the toggle-disjoint Cartesian product of $\cL_1$ and $\cL_2$ then $T(\cL)\cong T(\cL_1)\times T(\cL_2)$.

\begin{conventions*}
It may happen that for some elements $e\in E$, $\tau_e$ is the identity element of the symmetric group on $\cL$.  Since we will always be assuming that $T(\cL)$ is transitive in what follows, this can only happen if $e$ is in all or none of the sets in $\cL$.  We will always remove such $e$'s from $E$. Doing this has no effect on $T(\cL)$.

To simplify terminology we shall refer to generalized toggle groups simply as toggle groups.    
\end{conventions*}

\section{Block systems in toggle groups}

Recall that if a group $G$ acts on a set $X$ then we say a subset $\cB\subseteq X$ is a \emph{block} for $G$ if, for all $g\in G$, either $g(\cB) = \cB$ or $g(\cB)$ is disjoint from $\cB$.  If $G$ acts transitively on $X$ and $\cB$ is a block for $G$ then the sets $\makeset{g(\cB)} {g\in G}$ form a partition of $X$. We call such a partition a \emph{block system} for $G$. Clearly, the blocks in a given block system must all have the same size. If there exists a block system for which this size is strictly between 1 and $|X|$ then the group action is said to be \emph{imprimitive}, and otherwise the action is said to be \emph{primitive}. A good source for background information on these ideas is  \cite[Chapter 1]{dixon1996permutation}.

The purpose of this section is to explore how toggles behave on a block system. In particular, the lemmas established in this section describe a rich structure in the context of imprimitive toggle groups.  We then exploit this structure in the following sections.  

Recall that, as in the introduction, $E$ is our finite ground set, $\cL$ is a collection of subsets of $E$ and $T(\cL)$ is the corresponding toggle group.  We assume throughout that $T(\cL)$
acts transitively on $\cL$, and we fix a system of blocks for $T(\cL)$. We  often denote individual blocks by $\cB$ or $\cC$.

\begin{definition}
 We say that $g\in T(\cL)$ is \emph{type 2} if $g(\cB) = \cB$ for all blocks $\cB$, and that $g$ is \emph{type 1} otherwise.  
\end{definition}

\begin{lemma}\label{lem:commuting toggles}
Let $\tau_y$ be a type 1 toggle in  $T(\cL)$.  
\begin{enumerate}
    \item[i)] If $\tau_y(\cB) = \cB$ for some block $\cB$ then $\tau_y$ is the identity on this block.
    \item[ii)] $\tau_y$ commutes with all type 2 toggles in $T(\cL)$.  
    \item[iii)] For any type 2 toggle $\tau$ and blocks $\cB, \cC$, the cycle structure of $\tau|_{\cB}$ is the same as the cycle structure of $\tau|_{\cC}$.
\end{enumerate}
\end{lemma}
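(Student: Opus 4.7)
My plan is to prove the three items in order, with part (i) being the crux; once (i) is established, (ii) will follow from a short case analysis, and (iii) will follow from (ii).

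For part (i), I would argue by contradiction. Suppose $\tau_y$ is type 1 with $\tau_y(\cB)=\cB$ and there exists $X\in\cB$ with $X':=\tau_y(X)=X\toggle{y}\in\cB$, so the $y$-pair $\{X,X'\}$ sits inside the single block $\cB$. Since $\tau_y$ is type 1, pick blocks $\cD,\cD'$ with $\tau_y(\cD)=\cD'\neq\cD$; a standard block-theoretic observation shows every $W\in\cD$ satisfies $W\toggle{y}\in\cD'$, because any $\tau_y$-fixed point of $\cD$ would lie in $\cD\cap\cD'=\emptyset$. Using transitivity of $T(\cL)$, select $g\in T(\cL)$ with $g(X)=W$ for some chosen $W\in\cD$, so $g(\cB)=\cD$. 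The plan is to write $g=\tau_{z_k}\cdots\tau_{z_1}$ as a word in the toggles and track the fate of the internal $y$-pair $\{X,X'\}$ through these successive toggles: a single $\tau_{z_i}$ either preserves the ``both members of a $y$-pair inside one block'' structure or breaks it in a controlled manner, depending on which of $X\toggle{z_i}$ and $X\toggle{y,z_i}$ lie in $\cL$. Iterating should force the image of the pair to reside inside $\cD$, which contradicts the fact that the $y$-partner of every element of $\cD$ lies in the disjoint block $\cD'$.

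For part (ii), let $\tau=\tau_z$ be a type 2 toggle, fix $X\in\cL$, and let $\cB$ be the block of $X$. If $\tau_y(\cB)=\cB$, then (i) gives $\tau_y(X)=X$, and also $\tau_y(\tau(X))=\tau(X)$ since $\tau(X)\in\cB$ (as $\tau$ is type 2) and (i) applies; hence $\tau\tau_y(X)=\tau(X)=\tau_y\tau(X)$. If instead $\tau_y(\cB)=\cB'\neq\cB$, then $\tau_y(X)=X\toggle{y}\in\cB'$, and the required commutation reduces via case analysis to the biconditional
\[
X\toggle{z}\in\cL \;\iff\; X\toggle{y,z}\in\cL.
\]
This holds because $\tau$ fixes both $\cB$ and $\cB'$ setwise while $\tau_y$ maps $\cB$ entirely to $\cB'$: if $X\toggle{z}\in\cL$ then $X\toggle{z}\in\cB$, so $\tau_y(X\toggle{z})=X\toggle{y,z}$ lies in $\cB'\subseteq\cL$; conversely, if $X\toggle{y,z}\in\cL$ then $X\toggle{y,z}=\tau(X\toggle{y})\in\cB'$, and applying $\tau_y$ yields $X\toggle{z}\in\cB\subseteq\cL$.

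For part (iii), I would use (ii). Since every type 1 toggle commutes with every type 2 toggle, any product of toggles can be rearranged into the form $g=g_1g_2$ with $g_1$ a product of type 1 toggles and $g_2$ a product of type 2 toggles. By transitivity of $T(\cL)$ on blocks, pick such $g$ with $g(\cB)=\cC$; since $g_2$ fixes every block setwise, also $g_1(\cB)=\cC$. Because $g_1$ commutes with $\tau$, we obtain $\tau|_\cC=g_1\circ\tau|_\cB\circ g_1^{-1}$ under the bijection $g_1\colon\cB\to\cC$, so the two restrictions are conjugate as permutations and share the same cycle structure.

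The main obstacle is part (i): the assertion that a type 1 toggle cannot partially act on a setwise-fixed block uses essentially more than $\tau_y$ being an involution — it relies on the combinatorial specificity that all of $\tau_y$'s $2$-cycles differ by exactly the single ground-set element $y$. The careful toggle-by-toggle tracking needed to turn the transitivity-based transport argument into a clean contradiction is the most delicate ingredient; parts (ii) and (iii) then follow as outlined above.
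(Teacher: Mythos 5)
Your parts (ii) and (iii) are correct: (ii) reduces commutation to the right biconditional and proves it soundly, and (iii) is even a slight streamlining of the paper's argument, conjugating $\tau$ by the whole type~1 part $g_1$ of a transporting element (legitimate, since each type~1 factor commutes with $\tau$ by (ii) and the type~2 part fixes every block) rather than reducing to a single type~1 toggle step as the paper does. The genuine gap is in part (i), precisely the step you flag but do not carry out. The idea that makes the transport work, and which your sketch never states, is this: if a toggle $\tau_z$ maps a block to a \emph{different} block, then disjointness of blocks forces $\tau_z$ to have no fixed points in that block, so it acts on the entire block by symmetric difference with $z$; consequently it carries a pair $\{A,\,A\toggle{y}\}$ contained in the block to $\{A\toggle{z},\,A\toggle{y,z}\}$, again two members of $\cL$ differing by $y$, inside the image block. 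By contrast, a toggle that fixes the current block setwise may toggle exactly one member of the pair and genuinely destroy the ``$y$-pair'' structure; this is the case your phrase ``breaks it in a controlled manner'' gestures at without resolving, and if you insist on tracking the image of the original pair under the actual word for $g$, the induction stalls there.

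The fix (which is how the paper argues) is to abandon $g$ as a group element and keep only the induced chain of blocks: delete from the word every toggle that fixes the current block setwise --- this does not change the terminal block of the chain --- so that every remaining step moves the block, and by the observation above a $y$-pair survives each step. One then concludes either as the paper does, that every block reachable from $\cB$ (hence, by transitivity, every block) contains a $y$-pair and is therefore mapped to itself by $\tau_y$, contradicting that $\tau_y$ is type~1, or in your variant, that the block moved by $\tau_y$ would contain a $y$-pair even though each of its elements has its $y$-partner in the disjoint image block. With that one observation supplied, your outline for (i) becomes the paper's proof; without it, the central claim of the lemma is not established.
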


\begin{proof}
    We prove i) by contradiction.  Suppose that for some $A\in \cB$ we have $A, A\toggle{y}\in \cB$. By definition of $\tau_y$ there exists another block $\cC \neq \cB$.   By transitivity there exist toggles $\tau_{z_1},\ldots,\tau_{z_k}$ such that $\tau_{z_k}\cdots \tau_{z_1}(\cB)=\cC$. Let $\cB_0=\cB$ and $\cB_i=\tau_{z_i}(\cB_{i-1})$. By omitting some $\tau_i$'s we can assume that $\cB_i\neq \cB_{i-1}$.  Then $A\toggle{z_1}, A\toggle{y,z_1}\in \cB_1$ and hence $\tau_y(A\toggle{z_1}) = A\toggle{z_1,y}\in \cB_1$.  So $\tau_y(\cB_1) = \cB_1$. By iterating this argument we can show that $\tau_y(\cC)=\cC$.  Since $\cC$ was an arbitrary block distinct from $\cB$, it follows that $\tau_y$ is type 2, a contradiction. 
    
    To prove ii) let $\tau_x$ be a type 2 toggle and fix a block $\cB$.  If $\tau_y(\cB) = \cB$ then by i) it is clear that $\tau_x$ and $\tau_y$ commute at all elements of $\cB$.  So we may assume that $\tau_y(\cB) = \cC$ where $\cB \neq \cC$.  Fix $A \in \cB$ so that $A\toggle{y} = \tau_y(A) \in \cC$. If $\tau_x(A) = A$ then $A\toggle{x}\not\in \cB$ (since $A\toggle{x}\not\in \cL$) and hence $A\toggle{x,y}\notin \cC$.  So 
$$\tau_y\tau_x(A) = \tau_y(A) = A\toggle{y} = \tau_x(A\toggle{y}) = \tau_x\tau_y(A),$$  where the third equality follows from the fact that $\tau_x$ is type 2 so that $\tau_x(A\toggle{y})$ must be in $\cC$. So if $\tau_x(A)=A$ then $\tau_x$ and $\tau_y$ commute at $A$. If $\tau_x(A)= A\toggle{x}$ then $A\toggle{x}\in \cB$ and therefore $\tau_y\tau_x(A)= A\toggle{x,y} = \tau_x\tau_y(A)$. 

To prove iii) consider a type 2 toggle $\tau$ and blocks $\cB, \cC$.  As in the proof of i) it suffices by transitivity to prove the claim in the case where $\tau_y(\cB) = \cC$. If we write  $\tau$ as the product of its block restrictions we have
$$\tau = \cdots \tau|_\cB\cdots\tau|_\cC \cdots.$$
Conjugating by $\tau_y$ gives
$$\tau_y\tau\tau_y = \cdots (\tau_y\tau|_\cB\tau_y)\cdots (\tau_y\tau|_\cC\tau_y) \cdots$$
and we note that
$\tau_y\tau|_{\cC}\tau_y$ is a permutation of $\cB$.  Consequently $$\tau_y\tau|_{\cC}\tau_y = (\tau_y\tau\tau_y)|_{\cB} = \tau|_\cB,$$
where the last equality follows by part ii) and the fact that $\tau_y$ is type 1. Since conjugation preserves cycle structure the result is now clear.
\end{proof} 

\begin{lemma}\label{lem:type 2 toggles}
     Let $\rho\in T(\cL)$ be a product of type 2 toggles.  If $\rho$ fixes all elements, pointwise, in some block $\cB$ then $\rho=1$.  
\end{lemma}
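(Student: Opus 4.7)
The plan is to show that $\rho$ fixes every block of the system pointwise, by using transitivity to transport the pointwise fixing from $\cB$ to an arbitrary block $\cC$, while exploiting the fact that $\rho$ commutes with every type 1 toggle.

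First I observe that since $\rho$ is a product of type 2 toggles, $\rho(\cC) = \cC$ as a set for every block $\cC$. Fix such a block $\cC$; I claim $\rho$ is the identity on $\cC$. By transitivity of $T(\cL)$ on $\cL$ there is a product of toggles taking some element of $\cB$ to some element of $\cC$, and hence a product of toggles taking $\cB$ to $\cC$. Deleting any type 2 factors from this product — which act trivially on the block system and so do not affect the induced trajectory among blocks — leaves type 1 toggles $\tau_{y_1},\ldots,\tau_{y_m}$ whose product $g := \tau_{y_m}\cdots\tau_{y_1}$ still satisfies $g(\cB)=\cC$.

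By Lemma \ref{lem:commuting toggles}(ii), each $\tau_{y_i}$ commutes with every type 2 toggle and therefore commutes with $\rho$. Hence $g\rho = \rho g$. Since $g$ is a bijection of $\cL$ with $g(\cB)=\cC$, its restriction $g|_\cB\colon \cB\to\cC$ is a bijection, so any $B\in\cC$ can be written as $B=g(A)$ with $A\in\cB$. Then
$$\rho(B) = \rho(g(A)) = g(\rho(A)) = g(A) = B,$$
using $\rho|_\cB = \mathrm{id}$ at the third step. So $\rho$ is the identity on $\cC$, and since $\cC$ was arbitrary, $\rho = 1$.

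The main obstacle would have been finding a way to propagate the pointwise fixing from one block to another, but Lemma \ref{lem:commuting toggles}(ii) makes this immediate: commutativity lets us conjugate $\rho$ by any block-permuting element without changing $\rho$, which is exactly the tool needed. The only minor subtleties are the deletion of type 2 factors from the transitivity sequence and the verification that $g|_\cB$ is a bijection onto $\cC$, both of which are routine consequences of the definition of ``type 2'' and of a block system.
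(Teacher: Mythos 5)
Your proof is correct and takes essentially the same approach as the paper: both rely on Lemma~\ref{lem:commuting toggles}(ii) to commute $\rho$ past type 1 toggles and on transitivity to transport the pointwise fixing from $\cB$ to every other block. The only difference is cosmetic — the paper moves one block at a time via a single type 1 toggle (using that toggles are involutions) and iterates, whereas you conjugate by a product of type 1 toggles in one step, which sidesteps both the iteration and the involution property.
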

\begin{proof}
    If $\cB$ is the only block we are done.  Otherwise, transitivity implies that there must be some type 1 toggle $\tau$ such that $\tau(\cB) = \cC$ with $\cB \neq \cC$.  It follows by Lemma~\ref{lem:commuting toggles} that $\tau$ and $\rho$ commute.  Fix $C\in \cC$ and let $B=\tau(C)\in \cB$.  Since $\tau$ is an involution we have 
    $$\rho(C) = \rho\tau(B) = \tau \rho(B) = \tau(B) = C.$$
    Hence $\rho$ fixes all elements in $\cC$.  As in the proof of the first part of the preceding lemma, iterating this argument implies that $\rho = 1$.    
\end{proof}

\begin{lemma}\label{lem:type 1 toggles}
Let $\sigma\in T(\cL)$ be a product of type 1 toggles.  For each block $\cB$ there exists some set $X\subseteq E$ such that for each $A\in \cB$ we have
    $$\sigma(A) = A\triangle X.$$
In particular, if  $\sigma(\cB) = \cB$ then $\sigma|_{\cB}$ is either the identity or an involution with no fixed points.  
\end{lemma}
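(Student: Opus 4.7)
The plan is to prove the statement by induction on the number of toggles in $\sigma$, tracking the trajectory of a point $A \in \cB$ through the block system as the toggles of $\sigma$ are applied one at a time. The key structural fact that makes everything work is Lemma~\ref{lem:commuting toggles}(i): whenever $\tau_y$ is a type 1 toggle and $\cB'$ is any block, either $\tau_y$ acts as the identity on $\cB'$, or $\tau_y(\cB') \neq \cB'$ and in the latter case $\tau_y$ cannot fix any element of $\cB'$ (else two disjoint blocks would intersect), which forces $\tau_y(A') = A' \triangle \{y\}$ for \emph{every} $A' \in \cB'$.

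Write $\sigma = \tau_{y_1}\tau_{y_2}\cdots\tau_{y_k}$. For $A \in \cB$, define $A_0 = A$, $\cB_0 = \cB$, and for $1 \leq i \leq k$ set $A_i = \tau_{y_{k-i+1}}(A_{i-1})$ with $\cB_i$ the block containing $A_i$. By the dichotomy just described, at each step either $\cB_i = \cB_{i-1}$ and $A_i = A_{i-1}$, or $\cB_i \neq \cB_{i-1}$ and $A_i = A_{i-1}\triangle \{y_{k-i+1}\}$. Crucially, which alternative occurs depends only on the block $\cB_{i-1}$ and the toggle $\tau_{y_{k-i+1}}$, not on the particular element $A_{i-1}$; therefore the entire sequence of blocks $\cB_0, \cB_1, \ldots, \cB_k$, and hence the set
$$X = \triangle_{\{\,i\,:\,\cB_i \neq \cB_{i-1}\,\}}\, \{y_{k-i+1}\},$$
depends only on $\cB$ and $\sigma$. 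Collapsing the telescoping symmetric differences gives $\sigma(A) = A_k = A\triangle X$ for every $A \in \cB$, which is the main claim.

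For the ``in particular" clause, suppose $\sigma(\cB) = \cB$, so that $\sigma|_{\cB}$ is a permutation of $\cB$ of the form $A \mapsto A\triangle X$ with $X$ fixed. If $X = \emptyset$, then $\sigma|_\cB$ is the identity. Otherwise $A\triangle X \neq A$ for every $A\in\cB$, so $\sigma|_\cB$ has no fixed points, while $(A\triangle X)\triangle X = A$ shows $\sigma|_{\cB}$ is an involution. I do not anticipate a serious obstacle: the only delicate point is verifying that the set $X$ is the same for every $A \in \cB$, which is exactly why the ``all-or-nothing" content of Lemma~\ref{lem:commuting toggles}(i) is essential — without it one would have to worry about type 1 toggles that fix some but not all elements of a block, and the induction would fail.
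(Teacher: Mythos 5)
Your proof is correct and follows essentially the same route as the paper's: track the image of the block under the toggles one at a time, use Lemma~\ref{lem:commuting toggles}(i) to see that each type 1 toggle either fixes the current block pointwise or toggles every element of it by a singleton, and accumulate the symmetric differences into a single set $X$ independent of the starting element. The only cosmetic difference is that the paper discards the identity-acting toggles at the outset while you carry the dichotomy through explicitly.
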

\begin{proof}
    Let $\sigma = \tau_{x_k}\cdots \tau_{x_1}$ where each $\tau_{x_i}$ is a type 1 toggle. Set $\cB_0 = \cB$ and let $\cB_i = \tau_{x_i}(\cB_{i-1})$.  By the first part of Lemma~\ref{lem:commuting toggles} we may assume $\cB_i \neq \cB_{i-1}$.  It then follows that 
    $$\cB_{i} = \makeset{A\toggle{x_i}}{A\in \cB_{i-1}}.$$  
    Defining $X$ to be the set of all the $x_i$ that appear an odd number of times in $x_1,\ldots, x_k$ it further follows that $\sigma(A) = A\triangle X$ for every $A\in \cB$.  This proves our first claim.    The second
claim follows since $\sigma$ is now seen to  act by symmetric difference with a fixed set $X$, on all of $\cB$.
\end{proof}
\begin{lemma}\label{lem:disjoint toggles}

Let $X$ and $Y$ be disjoint subsets of $E$ and define $\sigma = \tau_{x_1}\cdots \tau_{x_m}$ and $\rho = \tau_{y_1}\cdots \tau_{y_\ell}$ where $x_i\in X, \ y_j \in Y$.  For all $A\in \cL$ we have  $\sigma(A) = \rho(A)$ if and only if $\sigma(A) =A =  \rho(A)$.  
\end{lemma}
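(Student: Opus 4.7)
The nontrivial direction is the ``only if'' part, since if $\sigma(A)=A=\rho(A)$ then trivially $\sigma(A)=\rho(A)$. The plan for the converse is to show that for every $A\in\cL$ we have $\sigma(A)\triangle A \subseteq X$ and $\rho(A)\triangle A \subseteq Y$; once this is established, the disjointness hypothesis $X\cap Y=\emptyset$ forces both symmetric differences to be empty.

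To prove the first containment, I would track the effect of $\sigma$ toggle by toggle. Set $A_0=A$ and, for $1\le i\le m$, let $A_i=\tau_{x_{m+1-i}}(A_{i-1})$, so that $A_m=\sigma(A)$. Directly from the definition of a toggle, $A_i\triangle A_{i-1}$ is either $\emptyset$ or $\{x_{m+1-i}\}$, and in either case it is a subset of $X$. The telescoping identity
$$\sigma(A)\triangle A \;=\; (A_m\triangle A_{m-1})\triangle(A_{m-1}\triangle A_{m-2})\triangle\cdots\triangle(A_1\triangle A_0)$$
then exhibits $\sigma(A)\triangle A$ as a symmetric difference of subsets of $X$, which is itself a subset of $X$. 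The identical argument, carried out with $\rho$ and $Y$ in place of $\sigma$ and $X$, gives $\rho(A)\triangle A\subseteq Y$.

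To finish, assume $\sigma(A)=\rho(A)$. Taking the symmetric difference of both sides with $A$ gives $\sigma(A)\triangle A=\rho(A)\triangle A$, and by the previous paragraph this common set lies in $X\cap Y=\emptyset$. Hence $\sigma(A)\triangle A=\emptyset=\rho(A)\triangle A$, i.e.\ $\sigma(A)=A=\rho(A)$, as required. I do not anticipate any real obstacle in this proof; the whole argument amounts to recording the purely ``local'' fact that a product of toggles indexed by elements of a set $Z\subseteq E$ can change an input $A$ only on positions in $Z$.
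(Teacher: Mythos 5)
Your proof is correct and follows essentially the same route as the paper: the paper simply asserts that $\sigma(A)=A\triangle S$ for some $S\subseteq X$ and $\rho(A)=A\triangle T$ for some $T\subseteq Y$, then uses disjointness of $X$ and $Y$ exactly as you do. Your toggle-by-toggle telescoping argument just supplies the (routine) justification of that asserted fact in more detail.
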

\begin{proof}
    The reverse direction is clear.  Now assume $\sigma(A) = \rho(A)$.  For some $S\subseteq X$ and $T\subseteq Y$ we have
    $$A\triangle S = \sigma(A) =\rho(A) = A\triangle T.$$
    (Note that the subsets $S$ and $T$ may be dependent on $A$.) As $X$ and $Y$ are disjoint this can only occur if $S = T = \emptyset$, and the forward direction follows.  
\end{proof}

\begin{lemma}\label{lem: cartesian product}

Let $E_1 = \makeset{x\in E}{\tau_x \text{ is type 1}}$,\  $E_2 = \makeset{y\in E}{\tau_y \text{ is type 2}}$ and set
$\cL_i = \makeset{A\cap E_i}{A\in \cL}$.

     Suppose $T(\cL)$ is such that any product $\sigma$ of type 1 toggles  has the property that if $\sigma(\cB) = \cB$, for some block $\cB$, then $\sigma(A) = A$ for all $A\in\cB$.  Then $\cL = \cL_1\otimes \cL_2$ where $|\cL_1|$ is the number of blocks and $|\cL_2|$ is the size of each block. 
\end{lemma}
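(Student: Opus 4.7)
The plan is to factor every element of $T(\cL)$ via Lemma~\ref{lem:commuting toggles}(ii) as a product $\sigma\rho$ with $\sigma$ a product of type 1 toggles and $\rho$ a product of type 2 toggles, then use this factorization to produce a bijection between the blocks and $\cL_1$.

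First I will record two facts about such a factorization. Each type 2 toggle $\tau_y$ alters only the coordinate $y\in E_2$, so $\rho$ preserves the $E_1$-part of every set in $\cL$. Applying Lemma~\ref{lem:type 1 toggles} to $\sigma$ on a single block $\cB$ gives a subset $X\subseteq E_1$ with $\sigma(A) = A\triangle X$ for all $A\in\cB$; since $X\cap E_2 = \emptyset$, this shows $\sigma$ preserves $E_2$-parts on $\cB$.

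Next I will define $\varphi(\cB) = A\cap E_1$ for any $A\in\cB$ and verify that it is a bijection between the set of blocks and $\cL_1$. For well-definedness, given $A,A'\in\cB$, transitivity of the stabilizer $T(\cL)_\cB$ on $\cB$ yields $g\in T(\cL)_\cB$ with $g(A) = A'$; writing $g = \sigma\rho$, the type 2 factor $\rho$ stabilizes $\cB$ and hence so does $\sigma$, and the standing hypothesis then forces $\sigma$ to act as the identity on $\cB$, whence $A' = \rho(A)$ has the same $E_1$-part as $A$. Surjectivity of $\varphi$ is immediate from the definition of $\cL_1$. For injectivity, suppose $\varphi(\cB) = \varphi(\cC)$ and pick $A\in\cB$, $A'\in\cC$ with $A\cap E_1 = A'\cap E_1$; by transitivity of $T(\cL)$, write $A' = \sigma\rho(A)$, and compare $E_1$-parts on both sides (using the two observations above) to deduce that the symmetric-difference set $X$ of $\sigma$ on $\cB$ is empty; then $\sigma$ acts as the identity on $\cB$, so $A' = \rho(A)\in\cB$ and $\cC = \cB$.

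Finally I will show that $\makeset{A\cap E_2}{A\in\cB} = \cL_2$ for every block $\cB$. If $g = \sigma\rho$ sends $\cB$ to $\cB'$, then $\rho$ permutes $\cB$ and $\sigma|_{\cB}$ is symmetric difference with $X\subseteq E_1$, so $\cB$ and $\cB'$ have the same set of $E_2$-parts; by transitivity on blocks these $E_2$-images coincide for all blocks, and since their union over all blocks is $\cL_2$, each one is $\cL_2$. Combined with the bijection $\varphi$, this yields $\cL = \makeset{A_1\cup A_2}{A_1\in\cL_1,\ A_2\in\cL_2} = \cL_1\otimes\cL_2$, with $|\cL_1|$ blocks each of size $|\cL_2|$. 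The main obstacle is the injectivity of $\varphi$: this is where the special hypothesis on products of type 1 toggles is used in an essential way, and it requires carefully tracking the $E_1$-coordinate through the factored element $\sigma\rho$ before the hypothesis can be invoked.
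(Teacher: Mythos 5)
Your proposal is correct and rests on essentially the same mechanism as the paper's proof: factor group elements as (type 1)$\cdot$(type 2) using Lemma~\ref{lem:commuting toggles}(ii), then use the hypothesis to force the type 1 factor to act trivially on any block it stabilizes, so that all sets in a block share their $E_1$-part while the $E_2$-parts within each block exhaust $\cL_2$ --- the paper packages this via orbits (``layers'') under the type 1 toggles rather than via your bijection $\varphi$ from blocks to $\cL_1$, but the content is the same. One small remark: contrary to your closing sentence, the special hypothesis is used where you invoke it, namely in the well-definedness of $\varphi$; your injectivity step needs only Lemma~\ref{lem:type 1 toggles} together with the disjointness of $E_1$ and $E_2$.
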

\begin{proof}
Fix $A\in \cL$ and define $\cO(A)$ to be the orbit containing $A$ under the group generated by all type 1 toggles.  As $T(\cL)$ is transitive it follows that for any two blocks there is a product of type 1 toggles that maps one to the other.  As a result $\cO(A)$ contains at least one set from each block. 
 On the other hand if $B,B'\in \cO(A)$ are in the same block it follows by our assumption about type 1 toggles that $B=B'$.  So $\cO(A)$ contains exactly one set from each block. Consequently for a fixed block $\cB$ we have the disjoint union
$$\cL = \bigcup_{A\in \cB} \cO(A).$$
We call the sets $\cO(A)$ \emph{layers}.

If $A\neq B$ are elements of the same block $\cB$ then by transitivity, the second part of Lemma~\ref{lem:commuting toggles} and the fact that $\cO(A)$ contains exactly one set from each block, it follows that for some product $\rho$ of type 2 toggles  we have $\rho(A)=B$.  From this we see that
$$A\cap E_1 = B\cap E_1 \text{ and } A\cap E_2 \neq B\cap E_2.$$
On the other hand, if $A\neq C$ are elements in the same layer then 
$$A\cap E_1 \neq C\cap E_1 \text{ and } A\cap E_2 = C\cap E_2.$$

So all the sets in $\cO(A)$ have different intersections with $E_1$ and every set $D\in \cL$ has the same intersection with $E_1$ as some set in $\cO(A)$ since $D$ is in the same block as some set in $\cO(A)$. Thus $|\cL_1|=|\cO(A)|$ is the number of blocks $\cB_i$, since $\cO(A)$ contains exactly one set from each block.  Since all sets in $\cO(A)$ have the same intersection with $E_2$ we have  
$$\cO(A) = \cL_1 \otimes \{A\cap E_2\}.$$
So we have
$$\cL = \bigcup_{A\in \cB} \cO(A) = \bigcup_{A\in \cB} \cL_1 \otimes \{A\cap E_2\} = \cL_1 \otimes \cL_2,$$ since every set in $\cL$ is in the same layer as some set in $\cB$, hence has the same intersection with $E_2$.
The last equality further shows that $|\cL_2|$ is the size of each block as claimed.
\end{proof}

\begin{lemma}\label{lem:odd block size}
Let $T(\cL)$ be imprimitive  with a system of blocks of imprimitivity $\cB_i$.  If the block size is odd, then $\cL$ is a toggle-disjoint Cartesian product of sets each of which has at least two elements.
\end{lemma}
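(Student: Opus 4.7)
My plan is to reduce the claim to Lemma~\ref{lem: cartesian product} by verifying its hypothesis using a parity argument, and then to read off the size conditions from imprimitivity.

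First I would observe that Lemma~\ref{lem:type 1 toggles} tells us exactly what a product $\sigma$ of type 1 toggles can look like when it stabilizes a block $\cB$: the restriction $\sigma|_\cB$ is either the identity or a fixed-point-free involution. A fixed-point-free involution on a finite set requires the set to have even cardinality, since its orbits are all of size two. By hypothesis, $|\cB|$ is odd, so the fixed-point-free involution alternative is ruled out. Hence $\sigma|_\cB$ is the identity, i.e., $\sigma(A)=A$ for every $A\in\cB$.

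This is precisely the hypothesis of Lemma~\ref{lem: cartesian product}, which applies to any block $\cB$. Invoking that lemma yields the decomposition $\cL=\cL_1\otimes\cL_2$, where $|\cL_1|$ equals the number of blocks and $|\cL_2|$ equals the common block size.

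Finally, I would verify that both factors are nontrivial. Since $T(\cL)$ is imprimitive, the block system is a genuine refinement, so the block size $|\cL_2|$ satisfies $1<|\cL_2|<|\cL|$; combined with the oddness hypothesis this actually forces $|\cL_2|\ge 3$, and in any case $|\cL_2|\ge 2$. The number of blocks $|\cL_1|=|\cL|/|\cL_2|$ is therefore also strictly greater than $1$, so $|\cL_1|\ge 2$. This gives the toggle-disjoint Cartesian product decomposition of $\cL$ into two sets each of size at least two, as required. The only real content is the parity observation in the first paragraph; the rest is bookkeeping driven by the previously established lemmas, so I do not expect any obstacle.
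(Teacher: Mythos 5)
Your proposal is correct and follows essentially the same route as the paper: rule out the fixed-point-free involution case of Lemma~\ref{lem:type 1 toggles} by the parity of the block size, then invoke Lemma~\ref{lem: cartesian product}. The only addition is your explicit check that both factors have size at least two, which the paper leaves implicit since imprimitivity forces the block size to be strictly between $1$ and $|\cL|$.
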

\begin{proof}
To apply Lemma~\ref{lem: cartesian product}, suppose $\sigma$ is a product of type 1 toggles such that $\sigma(\cB_i) = \cB_i$ for some $i$.  By Lemma~\ref{lem:type 1 toggles} it follows that $\sigma|_{\cB_i}$ is either an  involution with no fixed points or the identity.  Since the blocks $\cB_i$  have odd size we see that $\sigma|_{\cB_i}$ is the identity.  Lemma~\ref{lem: cartesian product} completes the proof.
\end{proof}

\begin{lemma}\label{lem:odd cycle}
Let $T(\cL)$ be imprimitive with a system of blocks of imprimitivity $\cB_i$.  Assume there exists some $\sigma$, a product of type 1 toggles, such that $\sigma(\cB_i) = \cB_i$ and $\sigma|_{\cB_i}\neq 1$ for some $\cB_i$.  Then  $\tau|_{\cB_j}$ is an even permutation for all type 2 toggles $\tau$ and all blocks $\cB_j$.

Consequently, if for some $\cB_i$ there exists a type 2 toggle $\tau$ such that $\tau|_{\cB_i}$ is odd, then $\cL$ is a toggle-disjoint Cartesian product of two sets each of which has at least two elements. 
\end{lemma}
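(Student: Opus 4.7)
Fix the block $\cB_i$ on which $\sigma$ acts nontrivially, and let $\tau=\tau_x$ be any type 2 toggle. By Lemma~\ref{lem:type 1 toggles}, there is a set $X \subseteq E$ with $\sigma(A) = A\triangle X$ for every $A \in \cB_i$, and since $\sigma|_{\cB_i}\neq 1$ we have $X\neq\emptyset$, so $\sigma|_{\cB_i}$ is a fixed-point-free involution. The crucial observation is that $X \subseteq E_1$: the set $X$ is built in the proof of Lemma~\ref{lem:type 1 toggles} from the indices of the type 1 toggles composing $\sigma$, each of which lies in $E_1$. In particular $X \neq \{x\}$, since $x \in E_2$.

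Because $\tau$ is type 2 it preserves $\cB_i$, and by Lemma~\ref{lem:commuting toggles}ii) $\sigma$ and $\tau$ commute. Commutativity allows $\sigma$ to act on the set $\mathcal{T}$ of $2$-cycles of $\tau|_{\cB_i}$ via $\{A,\tau(A)\} \mapsto \{\sigma(A),\tau(\sigma(A))\}$. A fixed point of this action would force $\sigma(A) \in \{A,\tau(A)\}$; the first option is ruled out because $\sigma|_{\cB_i}$ is fixed-point-free, and the second yields $A\triangle X = A\triangle\{x\}$, hence $X=\{x\}$, contradicting the previous paragraph. So $\sigma$ permutes $\mathcal{T}$ without fixed points, $|\mathcal{T}|$ is even, and therefore $\tau|_{\cB_i}$ is an even permutation. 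By Lemma~\ref{lem:commuting toggles}iii) the cycle structure of $\tau$ is the same on every block, so $\tau|_{\cB_j}$ is even for all $j$.

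For the ``Consequently'' statement I would argue by contrapositive. If some type 2 toggle $\tau$ has $\tau|_{\cB_i}$ odd for some block, then the hypothesis of the present lemma must fail: no product $\sigma$ of type 1 toggles satisfies both $\sigma(\cB) = \cB$ and $\sigma|_{\cB}\neq 1$ for any block $\cB$. This is exactly the hypothesis of Lemma~\ref{lem: cartesian product}, which then yields $\cL = \cL_1 \otimes \cL_2$ with $|\cL_1|$ the number of blocks and $|\cL_2|$ the block size, both $\geq 2$ by imprimitivity.

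The main subtlety, and essentially the only real obstacle, is spotting that $X \subseteq E_1$ is automatic and rules out the one configuration in which $\sigma|_{\cB_i}$ could coincide with $\tau|_{\cB_i}$; without that observation the orbit analysis of $\langle\sigma|_{\cB_i},\tau|_{\cB_i}\rangle$ on $\cB_i$ admits an extra orbit shape which could contribute an odd number of transpositions to $\tau|_{\cB_i}$.
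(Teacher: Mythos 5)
Your proof is correct and is essentially the paper's argument in dual packaging: the paper lets $\tau$ permute the $\sigma$-pairs $\{A,\sigma(A)\}$ and rules out a pair being swapped via Lemma~\ref{lem:disjoint toggles}, while you let $\sigma$ permute the $2$-cycles of $\tau|_{\cB_i}$ and rule out fixed points via the observation $X\subseteq E_1$ (the same disjointness fact), then both proofs transfer evenness to all blocks by Lemma~\ref{lem:commuting toggles}(iii) and get the second claim from Lemma~\ref{lem: cartesian product}. The only tiny point worth making explicit is that $|\mathcal{T}|$ is even because the induced action of $\sigma$ on $\mathcal{T}$ is not just fixed-point-free but an involution (since $\sigma|_{\cB_i}^2=1$).
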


\begin{proof} 
Let $\cB_i$ and $\sigma$ be as stated. As $\sigma|_{\cB_i}\neq 1$, Lemma~\ref{lem:type 1 toggles} implies that $\sigma|_{\cB_i}$ is an involution with no fixed points.  Define a partition on $\cB_i$ so that elements $A$ and $B$ are in the same class if and only if $\sigma(A) = B$. Denote the classes in this partition by $\cC_j$ and observe that $|\cC_j| = 2$.    As $\sigma$ is a product of type 1 toggles, it follows from Lemma~\ref{lem:commuting toggles} that any type 2 toggle commutes with $\sigma$.  Consequently, the $\cC_j$ form a block system for the restriction to $\cB_i$ of the group $G$ generated by all the type 2 toggles. By Lemma~\ref{lem:disjoint toggles} we see that any type 2 toggle $\tau$ restricted to $\cB_i$ can either have $\tau(\cC_j) = \cC_k$ with $j\neq k$ or $\tau|_{\cC_j}=1$. As each $C_j$ has size 2 this implies that the restriction to $\cB_i$ of any type 2 toggle is an even permutation.  By the third part of Lemma~\ref{lem:commuting toggles}, this proves our first claim.

The proof of our second claim is now immediate by Lemma~\ref{lem: cartesian product}.
\end{proof}

\section{Transitive toggle groups containing short cycles}

Throughout this section we continue to assume that $T(\cL)$ is transitive. The \emph{degree} of $T(\cL)$ is the number of elements in the set $\cL$.

The first two theorems of this section state that if $T(\cL)$ has degree $n$ and contains a transposition then $T(\cL)$ is $S_n$, and if $T(\cL)$ has degree $n$ and contains a 3-cycle then $T(\cL)$ contains $A_n$.   These two theorems echo well-known results of Jordan for primitive permutation groups (see \cite[Theorem 3.3A]{dixon1996permutation} or \cite[Theorem 8.17, Corollary 8.19]{ isaacs2008finite}). Interestingly, in the toggle-group case we do not need to assume the groups are primitive but instead we obtain that as a consequence.

\begin{thm}\label{thm: transpositions}
Assume that $T(\cL)$ has degree $n$ and contains a transposition.  Then $T(\cL) \cong S_n$.
\end{thm}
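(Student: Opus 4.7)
My plan is first to show that $T(\cL)$ must be primitive, after which Jordan's classical theorem (\cite[Theorem 3.3A]{dixon1996permutation}) for primitive permutation groups containing a transposition gives $T(\cL) \cong S_n$ immediately.

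Suppose for contradiction that $T(\cL)$ is imprimitive, with a system of blocks of block size $b > 1$. An easy first observation is that the two points moved by $g = (A,B)$ must lie in a common block $\cB$: if they lay in distinct blocks, then $g$ would fix $\cB \setminus \{A\}$ pointwise while sending $A$ to $B \notin \cB$, so $g(\cB)$ would meet $\cB$ in $|\cB| - 1$ points without equaling it, forcing $|\cB| = 1$. I then split on the parity of $b$. If $b$ is odd, Lemma~\ref{lem:odd block size} produces a toggle-disjoint Cartesian decomposition $\cL = \cL_1 \otimes \cL_2$ with both factors of size at least two, so $T(\cL) \cong T(\cL_1) \times T(\cL_2)$; a routine fixed-point count in such a product (any nonidentity element moves at least $4$ points) rules out single transpositions, contradicting the existence of $g$. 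If $b$ is even and every product of type 1 toggles that preserves a block acts trivially on it, Lemma~\ref{lem: cartesian product} yields the same kind of Cartesian decomposition and the same contradiction.

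The remaining case is $b$ even together with some product $\sigma_0$ of type 1 toggles satisfying $\sigma_0|_{\cB_0} \neq 1$ for some block $\cB_0$. Lemma~\ref{lem:odd cycle} then guarantees that every type 2 toggle restricts to an even permutation on every block, and hence is an even permutation of $\cL$. The key point is that every type 1 toggle is likewise even: if $\tau_e$ is type 1, then by Lemma~\ref{lem:commuting toggles}(i) it acts as the identity on every block it preserves set-wise, while on any pair $\{\cB', \cB''\}$ of blocks it actually swaps, it restricts to a fixed-point-free involution on $\cB' \cup \cB''$ consisting of exactly $b$ disjoint transpositions. Since $b$ is even, each swap pair contributes an even permutation, so $\tau_e$ is even overall. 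Thus every generator, and hence all of $T(\cL)$, lies in $A_n$, contradicting the oddness of $g$.

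The main obstacle is this last case ($b$ even, Lemma~\ref{lem: cartesian product} inapplicable). The unlocking observation is that Lemma~\ref{lem:odd cycle} together with the simple count ``each swapped pair of blocks contributes exactly $b$ transpositions, and $b$ is even'' forces every generator of $T(\cL)$ to be an even permutation, pushing the whole group into $A_n$ and ruling out the transposition.
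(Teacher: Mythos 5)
Your proof is correct, but it takes a genuinely different route from the paper. The paper never invokes Jordan's theorem: it builds a block system tailored to the hypothesis (the equivalence classes of $A\sim B$ iff $A=B$ or $(A,B)\in T(\cL)$), writes $(A,B)=\sigma\rho$ with $\sigma$ a product of type 1 toggles and $\rho$ a product of type 2 toggles, and uses Lemmas~\ref{lem:commuting toggles}--\ref{lem:disjoint toggles} to force $\rho=1$ and then a parity/involution contradiction unless there is only one block --- at which point $T(\cL)$ contains \emph{all} transpositions and equals $S_n$ outright, with primitivity obtained only implicitly. You instead prove primitivity head-on for an arbitrary block system, leaning on the Cartesian-product machinery (Lemmas~\ref{lem: cartesian product}, \ref{lem:odd block size}, \ref{lem:odd cycle}) that the paper reserves mainly for Section 4: in the decomposable cases your observation that every nonidentity element of $T(\cL_1)\times T(\cL_2)$ in the product action moves at least four points of $\cL_1\otimes\cL_2$ cleanly excludes a transposition, and in the remaining case (even block size, Lemma~\ref{lem: cartesian product} inapplicable) Lemma~\ref{lem:odd cycle} plus the count of $b$ disjoint transpositions per swapped pair of blocks pushes every toggle, hence all of $T(\cL)$, into $A_n$ --- the same even-block-size parity fact the paper itself uses. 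Your argument then needs the classical Jordan result \cite[Theorem 3.3A]{dixon1996permutation} to finish, which is elementary and unobjectionable, but it makes the proof less self-contained and somewhat against the paper's stated aim of deriving such conclusions (and primitivity itself) internally; what it buys is a cleaner conceptual structure, with primitivity isolated as the key intermediate step, and a reusable fixed-point-count criterion for toggle-disjoint products. One cosmetic remark: your opening observation that the two points moved by $(A,B)$ share a block is never used in the subsequent case analysis.
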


\begin{proof}
We define an equivalence relation $\sim$ on $\cL$ by letting $A\sim B$ if and only if either $A=B$ or else $(A,B)\in T(\cL)$.  Since conjugating $(A,B)$ by $(B,C)$ yields $(A,C)$, it is easy to check that $\sim$ is an equivalence relation. Likewise, conjugating $(A,B)$ by any $g\in T(\cL)$ yields $(g(A),g(B))$, so the equivalence classes of $\sim$ constitute a system of blocks for $T(\cL)$. For the remainder of the proof we fix this system of blocks and denote the blocks by $\cB_1,\ldots, \cB_m$.

  We prove the theorem by showing that $m =1$ (which implies that $T(\cL)$ contains all transpositions).   By assumption we have $(A,B)\in T(\cL)$ for some $A,B\in \cL$, and we can suppose without loss of generality that $A,B\in \cB_1$.  By the second part of Lemma~\ref{lem:commuting toggles}  we can express 
  $$(A,B) = \underbrace{\tau_{y_1}\cdots \tau_{y_k}}_{\sigma}\underbrace{\tau_{x_1}\cdots \tau_{x_\ell}}_{\rho}$$
  where the  $\tau_{y_j}$ are type 1 and the $\tau_{x_i}$ are type 2.  By squaring both sides we obtain
  
  $$1 = \sigma^2 \rho^2.$$
  As $\{x_1,\ldots, x_m\} \cap \{y_1,\ldots, y_\ell\} = \emptyset$, Lemma~\ref{lem:disjoint toggles} implies that $\sigma^2 = \rho^2 = 1$.

  Assume for a contradiction that $m>1$, and let $C\in \cB_2$.  As $(A,B) = \sigma\rho$ we must have $\sigma\rho(C) = C$.  As $\sigma^2=1$ it follows that $\rho(C) = \sigma(C)$.  By Lemma~\ref{lem:disjoint toggles} then $\rho(C) = C = \sigma(C)$ for all $C\in \cB_2$.  So $\rho$, a product of type 2 toggles, fixes $\cB_2$ pointwise.  Hence by Lemma~\ref{lem:type 2 toggles}, $\rho=1$ and $\sigma = (A,B)$.  Now if each block has even size then it follows from the first part of Lemma~\ref{lem:commuting toggles} that  each $\tau_{y_i}$ is an even permutation, contradicting $\sigma= (A,B)$.  So each block has odd size. As $\sigma(\cB_1)=\cB_1$ it follows by the second claim in Lemma~\ref{lem:type 1 toggles} that $\sigma|_{\cB_1}=1$, contradicting the fact that $\sigma = (A,B)$. We conclude that $m =1$ as needed.
\end{proof}

\begin{thm}\label{thm:3-cycles}
    Assume $T(\cL)$ has degree $n$ and contains a 3-cycle.  Then $A_n\leq T(\cL)$.
\end{thm}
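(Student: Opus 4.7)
The plan is to show $T(\cL)$ is primitive and then invoke the classical Jordan theorem \cite[Theorem 3.3A]{dixon1996permutation} that a primitive subgroup of $S_n$ containing a 3-cycle contains $A_n$. Primitivity will be established by contradiction, using the type 1/type 2 machinery of Section 2 in a manner parallel to, but a bit more intricate than, the transposition case.

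Suppose $T(\cL)$ is imprimitive, with block system $\cB_1,\ldots,\cB_m$ where $m \geq 2$ and each $|\cB_i| \geq 2$, and let $g = (A,B,C)$ be the given 3-cycle. I first argue that $A,B,C$ lie in a common block. The induced action $\bar g$ of $g$ on blocks has order dividing $3$. If $\bar g$ cycles three distinct blocks $\cB_i \to \cB_j \to \cB_k \to \cB_i$, then any $D \in \cB_i$ with $D \neq A$ satisfies $g(D)=D$ (since $B,C$ lie outside $\cB_i$, so $D \notin \{A,B,C\}$), whence $D = g(D) \in \cB_j$, contradicting $\cB_i \cap \cB_j = \emptyset$. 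Thus $\bar g = 1$, so $g(\cB_1)=\cB_1$ for the block $\cB_1$ containing $A$, whence $B=g(A) \in \cB_1$ and similarly $C \in \cB_1$. In particular, every block $\cB_j$ with $j \neq 1$ is fixed pointwise by $g$.

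By Lemma~\ref{lem:commuting toggles}(ii) I may write $g = \sigma\rho$ with $\sigma$ a product of type 1 toggles, $\rho$ a product of type 2 toggles, and $\sigma,\rho$ commuting. Since $\rho$ and $g$ both fix every block setwise, so does $\sigma$. Cubing gives $\sigma^3\rho^3 = 1$, and since $\sigma^3$ uses only $E_1$ toggles while $\rho^{-3}$ uses only $E_2$ toggles, Lemma~\ref{lem:disjoint toggles} forces $\sigma^3 = \rho^3 = 1$. But Lemma~\ref{lem:type 1 toggles} says $\sigma|_{\cB}$ is either the identity or a fixed-point-free involution on each block, so $\sigma^2 = 1$; together with $\sigma^3 = 1$ this yields $\sigma = 1$ and hence $g = \rho$. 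Now $\rho$, as a product of type 2 toggles, fixes every $\cB_j$ with $j \neq 1$ pointwise, so Lemma~\ref{lem:type 2 toggles} gives $\rho = 1$, a contradiction.

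Therefore $T(\cL)$ is primitive, and Jordan's theorem delivers $A_n \leq T(\cL)$. The main subtlety is the block-level dichotomy: ruling out the case that $g$ genuinely permutes three distinct blocks relies crucially on the hypothesis that block size is at least $2$, which guarantees the existence of a non-moved element $D$ in the first block. After that, the rest is a mechanical combination of Lemmas~\ref{lem:commuting toggles}–\ref{lem:type 1 toggles} with the fact that a group element of order $3$ whose type 1 part has order dividing both $2$ and $3$ must be its type 2 part.
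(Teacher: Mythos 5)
Your proof is correct, but it takes a genuinely different route from the paper's. You prove that no nontrivial block system can coexist with the 3-cycle, conclude primitivity, and then invoke the classical Jordan theorem \cite[Theorem 3.3A]{dixon1996permutation}; the paper instead stays self-contained: it defines the relation $A\sim B$ iff $A=B$ or $(A,B,C)\in T(\cL)$ for some $C$, notes that its equivalence classes form a block system, shows this particular system has a single block, and then upgrades ``contains all 3-cycles'' to $A_n\leq T(\cL)$ by an elementary conjugation argument. The tailored block system buys the paper the fact that $A,B,C$ automatically lie in one block, so it never needs your block-level case analysis of the induced action; in your version, the case where three distinct blocks are cycled is handled correctly, but the parenthetical claim that $B,C$ lie outside the block $\cB_i$ containing $A$ deserves one more line (a block moved by $g$ contains no fixed point of $g$, so each cycled block meets $\{A,B,C\}$ in exactly one point; with $|\cB_i|\geq 2$ this already gives the contradiction). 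The core of the contradiction is the same in both arguments: write $g=\sigma\rho$ via Lemma~\ref{lem:commuting toggles}, use Lemma~\ref{lem:disjoint toggles} together with Lemma~\ref{lem:type 2 toggles} to eliminate $\rho$, and use Lemma~\ref{lem:type 1 toggles} to force $\sigma$ to have order at most $2$, incompatible with order $3$ -- you deduce $\sigma=1$ from $\sigma^2=\sigma^3=1$ and then kill $\rho$, while the paper kills $\rho$ first and lets $\sigma=(A,B,C)$ clash with Lemma~\ref{lem:type 1 toggles}. Your appeal to \cite[Theorem 3.3A]{dixon1996permutation} is legitimate (it is pre-classification, unlike the alternative via Corollary~\ref{corollary:includes A_n}), so the trade-off is only that you import an external theorem where the paper needs none, and the explicit primitivity you obtain is a modest bonus, since any subgroup of $S_n$ containing $A_n$ with $n\geq 3$ is primitive in any case.
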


\begin{proof}

We define an equivalence relation $\sim$ on $\cL$ by letting  $A\sim B$ if and only if either $A=B$ or else $(A,B,C) \in T(\cL)$ for some $C$.  Using the fact that $(A,B,C)^2=(B,A,C)$, it is easy to check, as in the preceding proof,  that $\sim$ is an equivalence relation, and that the equivalence classes of $\sim$ constitute a system of blocks for $T(\cL)$.  We again denote the blocks by $\cB_1,\ldots, \cB_m$.

To prove the theorem it suffices to show that $T(\cL)$ must contain all 3-cycles or, equivalently, that $m=1$. To see that this suffices, suppose $m=1$ and let $A,B,C$ be distinct elements of $\cL$. We claim that $(A,B,C)\in T(\cL)$. Since $m=1$ we have $A\sim B$, so $(A,B,D)\in T(\cL)$ for some $D\neq A,B$.  If $D=C$ we are done.  Otherwise, $D\neq C$ and since $m=1$ we must have a 3-cycle $(D,C,E)\in T(\cL)$.  If $E\neq A,B$ we are done by conjugating $(A,B,D)$ by $(D,C,E)$.  If $E=A$ then we have $(A,B,C)=(D,C,A)(A,B,D)\in T(\cL)$.  If $E= B$ we have $(A,B,C)=(A,B,D)(D,C,B)^2\in T(\cL)$.

    We now show that $m=1$.  By assumption we have a 3-cycle $(A,B,C)$ in $T(\cL)$, and without loss of generality we can suppose  that $A,B\in \cB_1$ and therefore $C\in \cB_1$ since $(A,B,C)$ maps $A$ and $B$ into the same block. By Lemma~\ref{lem:commuting toggles} we can write
    $$(A,B,C) = \underbrace{\tau_{y_1}\cdots \tau_{y_k}}_{\sigma}\underbrace{\tau_{x_1}\cdots \tau_{x_\ell}}_{\rho}$$
  where the  $\tau_{y_j}$ are type 1 and the $\tau_{x_i}$ are type 2.

For a contradiction assume $m>1$ and let $D\in \cB_i$ where $i\neq 1$.  As $(A,B,C) = \sigma\rho$ we have $\sigma\rho(D) = D$ and hence $\rho(D) = \sigma^{-1}(D)$.  By Lemma~\ref{lem:disjoint toggles} we conclude  that $\rho(D) = D$.  As $D$ is an arbitrary element of $\cB_i$ then $\rho$ is the identity on $\cB_i$.  By Lemma~\ref{lem:type 2 toggles} it follows that $\rho=1$.  So $\sigma =(A,B,C)$. But then Lemma~\ref{lem:type 1 toggles} forces $\sigma^2=1$, a contradiction.  We conclude that $m=1$ as needed.
\end{proof}

In addition to the results that motivated Theorem~\ref{thm: transpositions} and Theorem~\ref{thm:3-cycles}, Jordan also proved that if a primitive permutation group $G$ of degree $n$ contains a cycle of prime length $p\leq n-3$ then $A_n\leq G$ (see \cite[Theorem 3.3E]{dixon1996permutation} or \cite[Theorem 8.23]{ isaacs2008finite}). 
Much more recently, Jones, in \cite[Corollary 1.3]{jones2014primitive}, removed the assumption that the length of the cycle be prime in Jordan's result.  To obtain an analogue of Jones' result for toggle groups (as opposed to primitive permutation groups), we will combine Jones' result with the following.

\begin{thm}\label{thm:primitivity}
Suppose $T(\cL)$ has degree $n$ and contains a nontrivial cycle $\gamma$ of length  $ \leq n-1$.  Then $T(\cL)$ is primitive.

\end{thm}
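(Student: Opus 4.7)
The plan is proof by contradiction: assume $T(\cL)$ is imprimitive with block system $\cB_1,\ldots,\cB_m$ of block size $d\ge 2$, and argue that $T(\cL)$ must contain a transposition; Theorem~\ref{thm: transpositions} will then give $T(\cL)=S_n$, a primitive group, contradicting the assumed imprimitivity.

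Using Lemma~\ref{lem:commuting toggles}(ii) write $\gamma=\sigma\rho$ with $\sigma$ a product of type 1 toggles and $\rho$ a product of type 2 toggles. Let $r\ge 1$ be the number of blocks meeting the support of $\gamma$; since $\gamma$ is a single $k$-cycle, these $r$ blocks form a single $\bar\gamma$-orbit and $k=rd$. Because $k\le n-1$, $\gamma$ has at least one fixed point, so there is a non-support block $\cC$ (and hence $m>r$). Restricting $\sigma\rho=\gamma$ to $\cC$ and applying Lemma~\ref{lem:disjoint toggles} gives $\sigma|_\cC=\rho|_\cC=1$; Lemma~\ref{lem:type 2 toggles} then forces $\rho=1$, so $\gamma=\sigma$ is itself a product of type 1 toggles.

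If $r=1$, the support of $\gamma$ lies in a single block $\cB_1$, and Lemma~\ref{lem:type 1 toggles} gives that $\gamma|_{\cB_1}$ is either the identity or a fixed-point-free involution; since the nontrivial $k$-cycle $\gamma$ can be an involution only when $k=2$, $\gamma$ must be a transposition and we are done. If $r\ge 2$, consider $\gamma^r$: it is a product of type 1 toggles that preserves every block and restricts on each support block to a $d$-cycle. Lemma~\ref{lem:type 1 toggles} forces $d=2$, so $k=2r$ and $\gamma^r$ is the product of the $r$ block-transpositions on the support. Applying Lemma~\ref{lem:odd cycle} to $\gamma^r$ now shows every type 2 toggle acts as an even permutation on each two-element block, hence as the identity, so by the paper's conventions no type 2 toggles remain in $E$.

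To extract a transposition in the subcase $r\ge 2$, use transitivity of $T(\cL)$ on the $m$ blocks to find a toggle $\tau$ swapping some support block $\cB_a$ with a non-support block $\cB_b$. A direct computation gives $[\gamma^r,\tau]=T_{\cB_a}T_{\cB_b}$, where $T_\cB$ denotes the transposition of the two elements of $\cB$. Conjugating by $\gamma^i$ for $i=0,\ldots,r-1$ and multiplying yields $\gamma^r\cdot T_{\cB_b}^r$; when $r$ is odd, multiplying by $\gamma^{-r}$ isolates the single transposition $T_{\cB_b}\in T(\cL)$, and Theorem~\ref{thm: transpositions} finishes the proof. The hard case is $r$ even, where this telescoping collapses to $\gamma^r$; there an iterated commutator argument combining several toggles that swap distinct support/non-support pairs, together with the structural consequences of having no type 2 toggles, is expected to yield either a transposition or a 3-cycle (the latter closed by Theorem~\ref{thm:3-cycles}). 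This final step for $r$ even is the main technical obstacle.
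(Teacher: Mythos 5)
Your reduction up to the point $d=2$ is sound and matches the paper's strategy (write $\gamma=\sigma\rho$, use a block of fixed points plus Lemmas~\ref{lem:disjoint toggles} and \ref{lem:type 2 toggles} to get $\rho=1$, then hit $\gamma$ or a power of it with Lemma~\ref{lem:type 1 toggles}); your $r=1$ branch is essentially the paper's first case. But the $r\geq 2$ branch has a genuine gap: you explicitly leave the case $r$ even unresolved (``expected to yield either a transposition or a 3-cycle \dots the main technical obstacle''), so the proof is incomplete exactly where the difficulty lies. Moreover, the step you do carry out is shaky: the identity $[\gamma^r,\tau]=T_{\cB_a}T_{\cB_b}$ only holds if $\tau$ maps every support block other than $\cB_a$ to another support block; in general $\tau$ may exchange several support blocks with non-support blocks, and the commutator is then a longer product of transpositions. (A smaller imprecision: the blanket claim $k=rd$ fails when $r=1$ and the support block also contains fixed points of $\gamma$ --- the paper's first case --- though your $r=1$ argument does not actually use it.)

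The missing case does not need any transposition-extraction at all: once you know $\gamma=\sigma$ is a product of type 1 toggles and the support of $\gamma$ is a union of blocks of even size (you even have size exactly $2$), a parity count finishes the proof, and this is precisely how the paper closes its second case. Since the blocks have even size, the first part of Lemma~\ref{lem:commuting toggles} shows every type 1 toggle is an even permutation: it is the identity on each block it preserves, and on each pair of blocks it interchanges it is a product of $|\cB|$ (an even number of) transpositions. Hence $\gamma$, being a product of type 1 toggles, is even. On the other hand $\gamma$ is a cycle of length $rd$, which is even, so $\gamma$ is an odd permutation --- a contradiction, with no dependence on the parity of $r$, no commutators, and no need to first kill the type 2 toggles via Lemma~\ref{lem:odd cycle}. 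If you replace everything after your deduction $d=2$ by this parity argument, your proof becomes complete and is then essentially the paper's.
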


\begin{proof}
Suppose not, and take a system of blocks of imprimitivity for $T(\cL)$. Note that if two points of $\cL$ are in the same block then so are their images under $\gamma$, so if one point moved by $\gamma$ is in the same block as a fixed point of $\gamma$ then all points moved by $\gamma$ are in that block.

We first consider the case when all points moved by $\gamma$ are in a block $\cB$ that contains a fixed point of $\gamma$.  We know there are blocks other than $\cB$, and each of these other blocks consists entirely of fixed points of $\gamma$. Let $\cC$ be any one of these other blocks.  If we write $\gamma=\sigma\rho$ with $\sigma$ a product of type 1 toggles and $\rho$ a product of type 2 toggles, then we can argue as we have before that $\rho$ must fix every point of $\cC$, and therefore $\rho=1$ by Lemma~\ref{lem:type 2 toggles}. Therefore $\gamma=\sigma$, and $\gamma$ maps $\cB$ to itself. By Lemma~\ref{lem:type 1 toggles}, $\gamma^2=1$, so $\gamma$ is a transposition. By Theorem~\ref{thm: transpositions}, $T(\cL)$ is $S_n$, so since $T(\cL)$ has degree $n$, $T(\cL)$ is primitive.

We now consider the case when no points moved by $\gamma$ are in the same block as any fixed point of $\gamma$.  We first show that in this case each block has even size.  If not, then there is a block $\cB$ of odd size $|\cB|> 1$ consisting of points moved by $\gamma$, and there is a block $\cC$ consisting of fixed points of $\gamma$ (since $\gamma$ has length $\leq n-1$).  As in the first case, we write $\gamma=\sigma\rho$ and use the existence of the block $\cC$ to show that $\rho=1$ and $\gamma=\sigma$.  If we choose $A\neq B$ in $\cB$ then there exists a positive integer $m$ such that $\gamma^m(A)=B$, and therefore $\gamma^m$ maps $\cB$  to $\cB$.  Since $\gamma^m=\sigma^m$, a product of type 1 toggles, Lemma~\ref{lem:type 1 toggles} implies that $\gamma^m|_{\cB_i}$ is either an  involution with no fixed points or the identity.  Since the blocks have odd size we see that $\sigma|_{\cB_i}$ is the identity.  But this is impossible, since $\gamma^m(A)=B$ where $A\neq B$.

So we know the blocks have even size.  Since the elements moved by $\gamma$ comprise a set of blocks, $\gamma$ has even length, hence is an odd permutation. But the fact that the blocks have even size also implies, by the first part of Lemma~\ref{lem:commuting toggles}, that every type 1 toggle is an even permutation. Since $\gamma=\sigma$ is a product of type 1 toggles, $\gamma$ is an even permutation.  This contradiction concludes the proof.

\end{proof}

\begin{corollary}\label{corollary:includes A_n}
If $T(\cL)$ has degree $n$ and contains a nontrivial cycle of length $\leq n-3$ then $A_n\leq T(\cL)$.

\end{corollary}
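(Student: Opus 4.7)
The plan is essentially to combine the two results that the authors have just lined up: Theorem~\ref{thm:primitivity} (which does not require primitivity as an input) and Jones' Corollary~1.3 from \cite{jones2014primitive} (which does require primitivity).

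More specifically, let $\gamma \in T(\cL)$ be a nontrivial cycle of length $k$ with $k \leq n-3$. Since $k \leq n-3 \leq n-1$, the hypotheses of Theorem~\ref{thm:primitivity} are met, so $T(\cL)$ acts primitively on $\cL$. The cycle $\gamma$ moves exactly $k$ points and fixes $n-k \geq 3$ points, so $\gamma$ is a cycle of length at most $n-3$ in a primitive permutation group of degree $n$. Jones' corollary then applies directly and yields $A_n \leq T(\cL)$.

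There is essentially no obstacle here beyond checking that the numerical hypothesis $k \leq n-3$ is strong enough to feed both inputs: it automatically satisfies the $k \leq n-1$ threshold needed to invoke Theorem~\ref{thm:primitivity}, and it is exactly the ``at least three fixed points'' hypothesis of Jones' result. I would therefore present the corollary as a two-line proof that cites Theorem~\ref{thm:primitivity} to obtain primitivity and then cites \cite[Corollary 1.3]{jones2014primitive} to conclude $A_n \leq T(\cL)$, with a brief remark that this is where the classification of finite simple groups enters (through Jones' theorem), since the rest of the paper has so far been classification-free.
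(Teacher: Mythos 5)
Your proposal is correct and is essentially identical to the paper's own proof: invoke Theorem~\ref{thm:primitivity} (using $k\leq n-3\leq n-1$) to get primitivity, then apply \cite[Corollary 1.3]{jones2014primitive}, whose ``at least three fixed points'' hypothesis is exactly $k\leq n-3$. Nothing further is needed.
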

\begin{proof}
By Theorem~\ref{thm:primitivity}, $T(\cL)$ is primitive, so the corollary follows from \cite[Corollary 1.3]{jones2014primitive}, which states, among other things, that if $G$ is a primitive permutation group of degree $n$ that contains a nontrivial cycle with at least three fixed points, then $A_n\leq G$.

\end{proof}

Corollary~\ref{corollary:includes A_n} provides an alternate proof of Theorem~\ref{thm:3-cycles} if we check separately the cases $n\leq 5$.  But Corollary~\ref{corollary:includes A_n} relies on \cite[Corollary 1.3]{jones2014primitive}, which depends on the classification of the finite simple groups.

\section{Imprimitive toggle groups containing long cycles}

We have shown that a toggle group that contains a short cycle must be primitive.  Therefore, if an imprimitive toggle group contains a cycle that cycle can only be a long cycle.  In this section we study imprimitive toggle groups that contain a long cycle.

\begin{lemma}\label{lem:cartesian long cycle}
Let $E_1, E_2$ be disjoint ground sets and let $\cL_i\subseteq 2^{E_i}$ be such that $|\cL_i|
\geq 2$ for $i=1,2$. Set $\ell = |\cL_1|$ and $m = |\cL_2|$ and $\cL = \cL_1\otimes \cL_2$.  Then $T(\cL)$ contains a long cycle if and only if $(\ell,m) = 1$ and both $T(\cL_1)$ and $T(\cL_2)$  contain long cycles.
\end{lemma}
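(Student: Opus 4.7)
The cornerstone is the isomorphism $T(\cL) \cong T(\cL_1) \times T(\cL_2)$ cited from \cite[Theorem 2.18]{striker2018rowmotion} in the introduction. Under the natural bijection $\cL \leftrightarrow \cL_1 \times \cL_2$ sending $X_1 \cup X_2$ to $(X_1, X_2)$ (valid since $E_1 \cap E_2 = \emptyset$), an element of $T(\cL)$ acts componentwise as a pair $(\pi_1, \pi_2) \in T(\cL_1) \times T(\cL_2)$. So the lemma reduces to a purely group-theoretic question: for which $(\pi_1, \pi_2)$ is the componentwise action a single $\ell m$-cycle on $\cL_1 \times \cL_2$?

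The key ingredient I would invoke is the following elementary fact about orbits of a product of permutations on a Cartesian product: if $\pi_i$ has disjoint cycles of lengths $a_{i,1}, a_{i,2}, \ldots$ for $i=1,2$, then the restriction of $(\pi_1, \pi_2)$ to the invariant subset determined by a cycle of $\pi_1$ of length $a$ and a cycle of $\pi_2$ of length $b$ decomposes into $\gcd(a,b)$ orbits of length $\mathrm{lcm}(a,b)$. I would justify this in one line by noting that the $(\pi_1,\pi_2)$-orbit of any $(x,y)$ in this invariant subset has length equal to the smallest $k$ with $\pi_1^k(x)=x$ and $\pi_2^k(y)=y$, which is $\mathrm{lcm}(a,b)$, and then that a set of size $ab$ partitioned into orbits of size $\mathrm{lcm}(a,b)$ has $ab/\mathrm{lcm}(a,b) = \gcd(a,b)$ orbits.

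For the forward implication, assume $T(\cL)$ contains an $\ell m$-cycle, corresponding to some pair $(\pi_1, \pi_2)$. Since this element acts transitively on $\cL_1 \times \cL_2$, each $\pi_i$ must consist of a single cycle: if $\pi_1$ had two cycles on $\cL_1$, then the preimages of these in $\cL_1 \times \cL_2$ would give two disjoint $(\pi_1, \pi_2)$-invariant subsets, contradicting transitivity. Hence $\pi_1$ is an $\ell$-cycle on $\cL_1$, $\pi_2$ is an $m$-cycle on $\cL_2$, and the single orbit has length $\mathrm{lcm}(\ell,m)$. This must equal $\ell m$, forcing $\gcd(\ell,m)=1$.

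For the reverse implication, suppose $\pi_i \in T(\cL_i)$ is a long cycle for $i=1,2$ and $\gcd(\ell, m)=1$. Then applying the orbit-length fact above to the single pair of cycles shows that $(\pi_1, \pi_2)$ acts on $\cL_1 \times \cL_2$ with exactly $\gcd(\ell,m)=1$ orbit of length $\mathrm{lcm}(\ell,m)=\ell m$. So the corresponding element of $T(\cL)$ is a long cycle. There is no real obstacle here; once the product isomorphism and the orbit-length lemma are in hand, the proof is essentially bookkeeping, and the only observation requiring care is that failing to have both $\pi_i$ be single cycles on $\cL_i$ immediately breaks transitivity on the Cartesian product.
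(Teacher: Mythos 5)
Your proof is correct, but it takes a genuinely different route from the paper, mainly in the forward direction. You reduce everything to the permutation-equivariant form of Striker's product theorem: identifying $\cL$ with $\cL_1\times\cL_2$, every element of $T(\cL)$ acts componentwise as a pair in $T(\cL_1)\times T(\cL_2)$ and every such pair occurs, so the lemma becomes the elementary fact that $(\pi_1,\pi_2)$ is an $\ell m$-cycle on the product iff each $\pi_i$ is a full cycle and $\gcd(\ell,m)=1$, proved by your $\gcd$/$\mathrm{lcm}$ orbit count. The paper instead stays inside its block-system machinery from Section 2: it takes the blocks $\cC_i=\makeset{A_i\cup B_j}{B_j\in\cL_2}$, shows the minimal return exponent of the long cycle $\gamma$ to a block is $\ell$ so that $\gamma^{\ell}$ is a long cycle on each block, factors $\gamma=\sigma\rho$ into type 1 and type 2 parts via Lemma~\ref{lem:commuting toggles}, argues $\sigma^{\ell}=1$, and then reads off that $\rho$ restricted to a block is an $m$-cycle and $(\ell,m)=1$; the reverse directions are essentially identical (both are the Chinese-remainder orbit argument, and the paper's assertion that $\gamma'$ and $\delta'$ lie in $T(\cL)$ uses exactly the componentwise structure you invoke). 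Two small points you should make explicit, though neither is a real gap: the forward direction needs that the two components of an element of $T(\cL)$ lie in $T(\cL_1)$ and $T(\cL_2)$, and the converse needs that the map onto $T(\cL_1)\times T(\cL_2)$ is surjective; both follow at once from how the generators $\tau_e$, $e\in E_1\cup E_2$, act, and are the content of the cited Theorem 2.18 of Striker. What each approach buys: yours is shorter and more transparent for this lemma in isolation; the paper's block-level computation is designed to be reused verbatim in the proof of Theorem~\ref{thm:imprimitive with long cycle}, where one starts from an arbitrary system of imprimitivity and does not yet know that $\cL$ is a toggle-disjoint Cartesian product, so your direct-product shortcut would not be available there.
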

\begin{proof}
 Let $\cL_1 = \{A_1,\ldots, A_\ell\}$ and $\cL_2= \{B_1,\ldots, B_m\}$.  Consider the block system given by the blocks $$\cC_i = \makeset{A_i \cup B_j}{B_j \in \cL_2},$$
so that the type 1 toggles are precisely $\tau_x$ for $x\in E_1$ and the type 2 toggles are precisely $\tau_y$ for $y\in E_2$. (Recall that, by the convention we adopted at the outset, no $\tau_x$ or $\tau_y$ is the identity.)  Note that $|\cC_i|\geq 2$ for all $\cC_i$. 

   Suppose $T(\cL)$ contains a long cycle $\gamma$. Fix a block $\cC$. Since $|\cC|\geq 2$ and $\gamma$ is a long cycle, there exists a smallest positive integer $a$ such that $\gamma^a$ maps a point of $\cC$ into $\cC$. Choose $X\in \cC$ such that $\gamma^a(X)\in \cC$. By our choice of $a$, it follows that $X,\gamma(X), \gamma^2(X),\ldots, \gamma^{(a-1)}(X)$ must all be in distinct blocks and so $a\leq \ell$.  Further observe that $\gamma^{ka}(X)\in \cC$ for all $k$.  If we write $\ell m=ka+r$, with $0\leq r< a$,  then $\gamma^r(\gamma^{ka}(X))=X\in \cC$ since $\gamma$ is an $\ell m$-cycle. Since $0\leq r <a$ we must have $r=0$ by our choice of $a$.  So $a|\ell m$. Again since $\gamma$ is an $\ell m$-cycle, we see that $\gamma^{a}(X),\gamma^{2a}(X)\ldots, \gamma^{\ell m}(X)$ are distinct and in $\cC$.  As  $|\cC|=m$ have $\ell m/a \leq m$, so  $\ell\leq a$ and therefore $\ell=a$ and we have
   $$\gamma^{\ell} = \gamma_1\cdots \gamma_{\ell},$$
   where $\gamma_i$ is a long cycle on block $\cC_i$.

   Now, using Lemma~\ref{lem:commuting toggles}, write $\gamma = \sigma\rho$ where $\sigma$ is a product of type 1 toggles and $\rho$ is a product of type 2 toggles. Since $\sigma$ and $\rho$ commute we have $\gamma^{\ell} = \sigma^{\ell} \rho^{\ell}$, and   since $\gamma^{\ell}$ and $\rho^{\ell}$ are type 2 permutations so is $\sigma^{\ell}$.  Since $\sigma^{\ell}$ is a product of type 1 toggles and $\cL$ is a toggle-disjoint Cartesian product it follows that $\sigma^{\ell} = 1$.  So 
   $$\gamma_1\cdots \gamma_{\ell} = \rho^{\ell}.$$
   Restricting our attention to a fixed block $\cC_i$ we see that $\rho^{\ell}|_{\cC_i}  = \gamma_i$.  Since $\gamma_i$ is a long cycle on this block it follows that $\rho|_{\cC_i}$ must be a long cycle on $\cC_i$ and that $(m,\ell) = 1$.  It is now immediate that $T(\cL_2)$ must contain a long cycle as well.  By repeating this argument with the block system
   $$\cC_i' = \makeset{B_i \cup A_j}{A_j \in \cL_1}$$
   we conclude that $T(\cL_1)$ must also have a long cycle as claimed.  

Let us now consider the converse. Suppose $(m,\ell) = 1$ and $T(\cL_1)$ and $T(\cL_2)$  have long cycles $\gamma$ and $\delta$, respectively.  Note that $\gamma$ and $\delta$ have orders $\ell$ and $m$, respectively.  Label the sets in $\cL$ by $(i,j)$ for $1\leq i\leq \ell, \ 1\leq j\leq m$ and observe that $T(\cL)$ contains permutations 
$$\gamma'(i,j) : = (\gamma(i),j)$$
and
$$\delta'(i,j) : = (i,\delta(j))$$
with orders $\ell$ and $m$, respectively.  The element $\gamma'\sigma'$ is a long cycle in $T(\cL)$, because $\gamma'\sigma'(1,1),\ldots, (\gamma'\sigma')^{\ell m}(1,1)$ gives us all the elements of $\cL$.
\medskip
\end{proof}

\begin{thm}\label{thm:imprimitive with long cycle}
    Assume $T(\cL)$ is imprimitive and contains a long cycle.  Then $\cL = \cL_1\otimes \cL_2$ for some $\cL_1$ and $\cL_2$ such that $|\cL_1|\geq 2, \ |\cL_2|\geq 2,\ (|\cL_1|, |\cL_2|) =1$, and each of $T(\cL_1)$ and $T(\cL_2)$ contains a long cycle.
\end{thm}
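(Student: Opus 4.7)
The plan is to fix an arbitrary block system for $T(\cL)$, show that $\cL$ decomposes as a toggle-disjoint Cartesian product $\cL_1 \otimes \cL_2$ with $|\cL_1|,|\cL_2|\geq 2$, and then invoke Lemma~\ref{lem:cartesian long cycle} to obtain the coprimality of the two sizes and long cycles in each factor. Let $k\geq 2$ denote the number of blocks, $m\geq 2$ the block size (so $n=km$), and $\gamma$ the long cycle. Because $\langle\gamma\rangle$ is already transitive on $\cL$ it is transitive on the $k$ blocks, so $\gamma$ descends to a $k$-cycle on the block system.

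If $m$ is odd, Lemma~\ref{lem:odd block size} immediately delivers the desired decomposition, with factor sizes $k$ and $m$ coming from Lemma~\ref{lem: cartesian product}. If $m$ is even, my strategy is to produce a type 2 toggle with an odd restriction to some block and then invoke the second part of Lemma~\ref{lem:odd cycle}. The key observation is a parity computation: every type 1 toggle $\tau_y$ is an even permutation of $\cL$ when $m$ is even. Indeed, by part (i) of Lemma~\ref{lem:commuting toggles} such a $\tau_y$ is the identity on each block it stabilizes, and on each pair of blocks it swaps it acts as $m$ disjoint transpositions; since $m$ is even, the total number of transpositions contributed is even.

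Now $n=km$ is even, so the long cycle $\gamma$ is an odd permutation. Using part (ii) of Lemma~\ref{lem:commuting toggles} I write $\gamma=\sigma\rho$ with $\sigma$ a product of type 1 toggles and $\rho$ a product of type 2 toggles; by the observation above $\sigma$ is even, so $\rho$ must be odd. Hence at least one type 2 toggle $\tau_x$ appearing in the expression of $\rho$ is itself an odd permutation of $\cL$. By part (iii) of Lemma~\ref{lem:commuting toggles} the restriction $\tau_x|_\cB$ has the same cycle structure, and hence the same parity, on every block, so the global parity of $\tau_x$ is the $k$-th power of this common restriction-parity; for $\tau_x$ to be globally odd we must therefore have $\tau_x|_\cB$ itself odd (and $k$ odd as well). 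The second part of Lemma~\ref{lem:odd cycle} then supplies the decomposition $\cL=\cL_1\otimes\cL_2$ with $|\cL_1|=k$ and $|\cL_2|=m$, both at least 2, and Lemma~\ref{lem:cartesian long cycle} finishes by yielding $(|\cL_1|,|\cL_2|)=1$ and long cycles in each $T(\cL_i)$.

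The main obstacle is the even-$m$ case, where a priori some product of type 1 toggles might act as a nontrivial fixed-point-free involution on a block and thereby fail the hypothesis of Lemma~\ref{lem: cartesian product}; the parity computation above is what sidesteps this, because it forces a globally odd type 2 toggle into any long-cycle decomposition $\gamma=\sigma\rho$ and so routes the Cartesian-product conclusion through Lemma~\ref{lem:odd cycle} rather than through Lemma~\ref{lem: cartesian product} directly.
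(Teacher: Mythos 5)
Your proof is correct, and it uses the same scaffolding as the paper (the split on block-size parity, Lemma~\ref{lem:odd block size} for odd $m$, and the chain Lemma~\ref{lem:odd cycle} $\rightarrow$ Lemma~\ref{lem: cartesian product} $\rightarrow$ Lemma~\ref{lem:cartesian long cycle} for the decomposition and coprimality), but your handling of the even-$m$ case is genuinely more direct than the paper's. The paper first imports the argument from the proof of Lemma~\ref{lem:cartesian long cycle} to show $\gamma^{\ell}=\gamma_1\cdots\gamma_{\ell}$ with each $\gamma_i$ a long cycle on a block, then proves $\sigma^{\ell}=1$ by a parity contradiction, and finally concludes that $\rho$ restricts to an odd ($m$-cycle) permutation on every block before invoking the second part of Lemma~\ref{lem:odd cycle}. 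You bypass the whole $\gamma^{\ell}$ analysis: since $m$ is even, every type 1 toggle is even (by part i) of Lemma~\ref{lem:commuting toggles} it is the identity on stabilized blocks and a union of $m$ transpositions on each swapped pair), while $\gamma$ is an $n$-cycle of even length, hence odd; so in $\gamma=\sigma\rho$ the type 2 factor $\rho$ is odd, some type 2 toggle is odd, and since a type 2 toggle is the product of its block restrictions, some restriction is odd — which is all the second part of Lemma~\ref{lem:odd cycle} needs (it asks only for one block, whereas the paper's route produces one for every block). Your appeal to part iii) of Lemma~\ref{lem:commuting toggles}, and the parenthetical conclusion that $k$ is odd, are harmless but unnecessary. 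The trade-off is that the paper's longer route also exhibits the finer structure of $\gamma$ relative to the blocks (reused elsewhere in Lemma~\ref{lem:cartesian long cycle}), while your argument gets to the decomposition with less machinery.
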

\begin{proof}
As $T(\cL)$ is imprimitive, let $\cB_1,\ldots, \cB_{\ell}$ be a system of blocks of imprimitivity.  Set $|\cB_i| = m$.  If $m$ is odd, then the result follows from Lemma~\ref{lem:odd block size} and Lemma~\ref{lem:cartesian long cycle}.

Now assume $m$ is even. If we replace the blocks $\cC_i$ in the proof of Lemma~\ref{lem:cartesian long cycle} by the $\cB_i$'s and define $a$ as we did before, we obtain $$\gamma^{\ell} = \gamma_1\cdots \gamma_{\ell},$$
   where $\gamma_i$ is a long cycle on block $\cB_i$.  We write $\gamma=\sigma\rho$ as before and have $$\gamma_1\cdots \gamma_{\ell}=\gamma^{\ell}=\sigma^{\ell}\rho^{\ell}.$$ Since $\gamma^{\ell}$ and $\rho^{\ell}$ are type 2 permutations, so is $\sigma^{\ell}$.

We claim that $\sigma^{\ell} =1$.  For a contradiction, assume otherwise.  Lemma~\ref{lem:odd cycle} then implies that every type 2 toggle is even.  Since $m$ is even we also have (using the first part of Lemma~\ref{lem:commuting toggles})  that every type 1 toggle is even.  Therefore since $\gamma$ is product of toggles it must be even, but $\gamma$ is an $\ell m$-cycle which is odd  because $m$ is even.  We conclude that $\sigma^{\ell} = 1$ as claimed.    

Since $\sigma^{\ell} = 1$ we see that  $\rho^{\ell}= \gamma^{\ell}$ which is the long cycle $\gamma_i$ on each block $\cB_i$.  So $\rho$ must be a long cycle on each block $\cB_i$.  As each block has even size $\rho|_{\cB_i}$ must be odd.  Since $\rho$ is a product of type 2 toggles we must have, for each $\cB_i$, some type 2 toggle $\tau$ such that $\tau|_{\cB_i}$ is odd. 
 Lemma~\ref{lem:odd cycle} and Lemma~\ref{lem:cartesian long cycle} now complete the proof.
\end{proof}

\begin{corollary}\label{corollary:factorization}
Assume $T(\cL)$ is imprimitive and contains a long cycle. Then $$T(\cL) \cong T(\cL_1) \times \cdots \times T(\cL_k),$$
where each $T(\cL_i)$ is primitive and contains a long cycle, their degrees are larger than 1 and pairwise coprime, and $|\cL|=|\cL_1|\cdots|\cL_k|$.
\end{corollary}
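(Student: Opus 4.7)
The plan is to prove the corollary by strong induction on $|\cL|$. The base case is when $T(\cL)$ is already primitive; we then take $k=1$ and note that $T(\cL)$ itself contains a long cycle by hypothesis, so the conclusion holds trivially.

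For the inductive step, suppose $T(\cL)$ is imprimitive. I would apply Theorem~\ref{thm:imprimitive with long cycle} to obtain a decomposition $\cL = \cM_1 \otimes \cM_2$ where $|\cM_1|, |\cM_2| \geq 2$, $(|\cM_1|, |\cM_2|) = 1$, and each $T(\cM_i)$ contains a long cycle. Since $|\cM_i| < |\cL|$, the induction hypothesis applies to each $T(\cM_i)$ provided it is transitive, which it is (being generated by toggles on its own ground set). This gives
\[ T(\cM_1) \cong T(\cN_1) \times \cdots \times T(\cN_s), \qquad T(\cM_2) \cong T(\cP_1) \times \cdots \times T(\cP_t), \]
with each factor primitive, containing a long cycle, with degrees $\geq 2$ and (within each group) pairwise coprime. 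Combining with the fact \cite[Theorem 2.18]{striker2018rowmotion} cited in the introduction that $T(\cM_1 \otimes \cM_2) \cong T(\cM_1) \times T(\cM_2)$, one obtains the desired product decomposition upon concatenating the two lists.

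Two things remain to be checked. First, the ground-set structure must be associative, in the sense that if $\cM_1 \subseteq 2^{E_1}$ and $\cM_2 \subseteq 2^{E_2}$ with $E_1 \cap E_2 = \emptyset$, and if $\cM_1 = \cN_1 \otimes \cdots \otimes \cN_s$ over a partition of $E_1$, then $\cL = \cN_1 \otimes \cdots \otimes \cN_s \otimes \cP_1 \otimes \cdots \otimes \cP_t$ over the corresponding disjoint partition of $E_1 \cup E_2$; this follows immediately from unwinding the definition. Second, one must verify the global pairwise coprimality of the degrees. Within the list $|\cN_1|, \ldots, |\cN_s|$ coprimality is given by induction, and similarly for the $|\cP_j|$; for cross-pairs, each $|\cN_i|$ divides $|\cM_1|$ and each $|\cP_j|$ divides $|\cM_2|$, and since $(|\cM_1|, |\cM_2|)=1$ we get $(|\cN_i|, |\cP_j|) = 1$. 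The identity $|\cL| = \prod |\cN_i| \prod |\cP_j|$ follows from $|\cL| = |\cM_1| \cdot |\cM_2|$ and the corresponding products within each piece.

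I do not anticipate a serious obstacle here: all the real work has been done in Theorem~\ref{thm:imprimitive with long cycle}. The only mildly delicate step is the bookkeeping to confirm that coprimality propagates through the induction (as sketched above), and that the toggle-disjoint Cartesian product is associative on disjoint ground sets so that the two-factor splitting at each stage of the recursion can be flattened into a single $k$-fold product decomposition.
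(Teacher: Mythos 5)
Your proof is correct and is essentially the paper's own argument: the paper's proof is simply ``apply Theorem~\ref{thm:imprimitive with long cycle} repeatedly,'' and your induction with the associativity and coprimality bookkeeping just makes that explicit. One small point: the transitivity of each $T(\cM_i)$ does not follow merely from its being generated by toggles on its own ground set (toggle groups need not be transitive), but it does follow immediately from the fact that $T(\cM_i)$ contains a long cycle.
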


\begin{proof}
Apply Theorem~\ref{thm:imprimitive with long cycle} repeatedly.

\end{proof}

\begin{corollary}\label{corollary:primepowerdegree}
If $T(\cL)$ is transitive, has prime-power degree, and contains a long cycle, then $T(\cL)$ is primitive.

\end{corollary}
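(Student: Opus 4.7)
The plan is to prove this by contradiction, leveraging Corollary~\ref{corollary:factorization} directly. Suppose for a contradiction that $T(\cL)$ is imprimitive. Since $T(\cL)$ is transitive and contains a long cycle, Corollary~\ref{corollary:factorization} applies and yields a decomposition
$$T(\cL) \cong T(\cL_1) \times \cdots \times T(\cL_k)$$
with $k \geq 2$, where each $|\cL_i| \geq 2$, the $|\cL_i|$ are pairwise coprime, and $|\cL_1|\cdots |\cL_k| = |\cL|$. Note that $k \geq 2$ is forced by imprimitivity, since if the decomposition had only one factor, no nontrivial factorization would be produced by Theorem~\ref{thm:imprimitive with long cycle}.

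The main (and essentially only) step is then a simple number-theoretic observation. Write $|\cL| = p^n$ for some prime $p$ and positive integer $n$. Since the integers $|\cL_i|$ are pairwise coprime and all exceed $1$, at most one of them can be divisible by $p$. Every other $|\cL_j|$ must then be an integer greater than $1$ that divides $p^n$ but is coprime to $p$, which is impossible. Hence $k = 1$, contradicting the fact that $k \geq 2$.

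There is no real obstacle here: the content of the corollary is packed into Corollary~\ref{corollary:factorization}, and what remains is just the observation that a prime power cannot be written as a product of two or more pairwise coprime integers each at least $2$. Thus the write-up should be short, essentially just invoking Corollary~\ref{corollary:factorization} and pointing out the arithmetic contradiction.
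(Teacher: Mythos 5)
Your proof is correct and is essentially the paper's argument: the paper simply states that the corollary is immediate from Corollary~\ref{corollary:factorization}, and your write-up just makes explicit the arithmetic fact that a prime power cannot factor into two or more pairwise coprime integers each at least $2$.
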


\begin{proof}
This is immediate from Corollary \ref{corollary:factorization}.

\end{proof}

Corollary~\ref{corollary:primepowerdegree} expresses a special property of  toggle groups.  For example, the dihedral group of order eight, in its natural action on a set of order four, contains a 4-cycle but is not primitive by \cite[Theorem 4.2A(vi)]{dixon1996permutation}, since it has nontrivial center and is nonabelian.

In view of Theorem~\ref{thm:primitivity}, Corollary~\ref{corollary:includes A_n} and Corollary~\ref{corollary:factorization} it would be very interesting to have an answer to the following question: Which primitive toggle groups of degree $n$ contain a cycle of length at least $n-2$ but do not contain $A_n$?   A list of all the primitive permutation groups that contain a cycle of length at least $n-2$ but do not contain $A_n$ is given in \cite[Theorem 1.2]{jones2014primitive}.  The first groups on the list are the subgroups of the affine group $AGL_1(p)$ that contain a cyclic subgroup of order $p$, and we can show that these are not toggle groups.  But at this point we do not know the status of the other groups on the list.           

\bibliographystyle{acm} 
\bibliography{mybib}
\end{document}